\DeclareMathOperator{\dive}{div}
\DeclareMathOperator{\per}{Per}
\DeclareMathOperator{\dist}{dist}
\def\ds{\displaystyle}
\def\eps{{\varepsilon}}
\def\N{\mathbb{N}}
\def\R{\mathbb{R}}
\def\O{\Omega}
\def\Ob{\overline{\Omega}}
\def\HH{\mathcal{H}}
\def\LL{\mathcal{L}}
\newcommand{\be}{\begin{equation}}
\newcommand{\ee}{\end{equation}}
\newcommand{\bib}[4]{\bibitem{#1}{\sc#2: }{\it#3. }{#4.}}
\newcommand{\s}{\sigma}
\newcommand{\sint}{\s_{{\rm{int}}}}
\newcommand{\sext}{\s_{{\rm{ext}}}}
\newcommand{\uint}{u_{{\rm{int}}}}
\newcommand{\uext}{u_{{\rm{ext}}}}
\numberwithin{equation}{section}
\theoremstyle{plain}
\newtheorem{teo}{Theorem}[section]
\newtheorem{prop}[teo]{Proposition}
\theoremstyle{remark}
\newtheorem{oss}[teo]{Remark}
\newtheorem{exam}[teo]{Example}
\title[Optimal regions for congested transport]{Optimal regions for congested transport}
\author{Giuseppe Buttazzo}
\address{Dipartimento di Matematica, Universit\`a di Pisa, Largo B. Pontecorvo 5, 56126 Pisa, ITALY}
\email{buttazzo@dm.unipi.it}
\author{Guillaume Carlier}
\address{CEREMADE UMR CNRS 7534, Universit\'e de Paris Dauphine, Pl.~de Lattre de Tassigny, 75775 Paris Cedex 16, FRANCE}
\email{carlier@ceremade.dauphine.fr}
\author{Serena Guarino Lo Bianco}
\address{Dipartimento di Matematica, Universit\`a di Pisa, Largo B. Pontecorvo 5, 56127 Pisa, ITALY}
\email{sguarino@mail.dm.unipi.it}
\begin{document}
\maketitle

\begin{abstract}
We consider a given region $\O$ where the traffic flows according to two regimes: in a region $C$ we have a low congestion, where in the remaining part $\O\setminus C$ the congestion is higher. The two congestion functions $H_1$ and $H_2$ are given, but the region $C$ has to be determined in an optimal way in order to minimize the total transportation cost. Various penalization terms on $C$ are considered and some numerical computations are shown.
\end{abstract}

\medskip
\textbf{Keywords:} Shape optimization, transport problems, congestion effects, optimal networks.

\textbf{2010 Mathematics Subject Classification:} 49Q20, 49Q10, 90B20.

%%%%%%%%%%%%%%%%%%%%%%%%%%%%%%%%%%%%%%%%%%%%%%%%%%
\section{Introduction}\label{sintro}

As everybody has experienced while traveling in urban traffic, the planning of an efficient network of roads is an extremely complex problem, involving many parameters such as the distribution of residences and working places, the period of the day one travels, the attitude of drivers, \dots. The congestion effects are also to be taken into account, since they are responsible for the formation of traffic jams and and have a social cost in terms of waste of time.

In the present paper we consider a very simplified model in which the densities of residents and of working places are known, represented by two probability measures $f^+$ and $f^-$. The congestion effects in mass transportation theory has been deeply studied in the literature; we refer for instance to \cite{cajisa, bracar} and references therein. Denoting by $f$ the difference $f=f^+-f^-$ and by $\s$ the traffic flux, the model, in the stationary regime, reduces to a minimization problem of the form
\be\label{congest}
\min\left\{\int_\O H(\s)\,dx\ :\ -\dive \s=f\hbox{ in }\O,\ \s\cdot n=0\hbox{ on }\partial\O\right\}.
\ee
Here $\O$ is the urban region under consideration, a bounded Lipschitz domain of $\R^d$, the boundary conditions at $\partial\O$ are usually taken imposing zero normal flux $\s\cdot n=0$, and $H:\R^d\to[0,+\infty]$ is the {\it congestion function}, a convex nonnegative function with $\lim_{|s|\to+\infty}H(s)=+\infty$. The first order PDE
$$-\dive \s=f\hbox{ in }\O,\qquad \s\cdot n=0\hbox{ on }\partial\O$$
has to be intended in the weak sense
$$\langle \s,\nabla\phi\rangle=\langle f,\phi\rangle\qquad\hbox{for every } \phi\in C^\infty(\Ob)$$
and it captures the equilibrium between the traffic flux $\s$ and the difference between supply and demand $f$.

In the case $H(s)=|s|$ no congestion effect occurs, and the transport problem reduces to the Monge's transport, where mass particles travel along geodesics (segments in the Euclidean case). As it is well known, in the Monge's case the integral cost above is finite for every choice of the probabilities $f^+$ and $f^-$. On the contrary, when $H$ is superlinear, that is
$$\lim_{|s|\to+\infty}\frac{H(s)}{|s|}=+\infty,$$
congestion effects may occur and the mass particles trajectories follow more complicated paths. In this case the integral cost can be $+\infty$ if the source and target measures $f^+$ and $f^-$ are singular. For instance, if the congestion function $H$ has a quadratical growth, in order to have a finite cost it is necessary that the signed measure $f=f^+-f^-$ be in the dual Sobolev space $H^{-1}$; thus, if $d>1$ and the measures $f^+$ or $f^-$ contain some Dirac mass, the minimization problem \eqref{congest} is meaningless. In other words, superlinear congestion costs prevent too high concentrations.

In the present paper, we aim to address the efficient design of low-congestion regions; more precisely, two congestion functions $H_1$ and $H_2$ are given, with $H_1\le H_2$, and the goal is to find an optimal region $C\subset\O$ where the less congested traffic may travel. Since reducing the congestion in a region $C$ is costly (because of roads improvement, traffic devices, \dots), a term $m(C)$ will be added, to describe the cost of improving the region $C$, then penalizing too large low-congestion regions. On the region $\O\setminus C$ we then have the normally congested traffic governed by the function $H_2$, while on the low-congestion region $C$ the traffic is governed by the function $H_1$. Throughout the paper, we will assume that $H_1$ and $H_2$ are two continuous convex functions such that $0\le H_1\le H_2$ and 
$$\lim_{|s|\to+\infty}\frac{H_i(s)}{|s|}=+\infty, \; i=1,2.$$

The mathematical formulation of the problem then is as follows, for every region $C$ we may consider the cost function
\be\label{costf}
F(C)=\min\left\{\int_{\O\setminus C}H_2(\s)\,dx+\int_C H_1(\s)\,dx\ :\ -\dive \s=f\hbox{ in }\O,\ \s\cdot n=0\hbox{ on }\partial\O\right\},
\ee
so that the optimal design of the low-congestion region amounts to the minimization problem
\be\label{minpb}
\min\big\{F(C)+m(C)\ :\ C\subset\O\big\}.
\ee

Several cases will be studied in the sequel, according to the various constraints on the low-congestion region $C$ and the corresponding penalization/cost $m(C)$. The simplest case is when $C$ is a $d$-dimensional subdomain of $\O$ and the penalization $m(C)$ involves the perimeter of $C$: in this situation an optimal region $C$ is shown to exist and a necessary optimal condition is established.

When $m(C)$ is simply proportional to the Lebesgue measure of $C$ (that we will denote by $\LL^d(C)$ or by $|C|$), on the contrary an optimal domain $C$ may fail to exist and a relaxed formulation of the problem has to be considered; in this case the optimal choice for the planner is to have a low-congestion area $C_0$, a normally congested area $C_1$, together with an area $\O\setminus(C_0\cup C_1)$ with intermediate congestion (that is a mixing of the two congestion functions occurs). For this case, we also give some numerical simulations in dimension two.

Another class of problems arises when the admissible sets $C$ are {\it networks}, that is closed connected one-dimensional sets. In this case the penalization cost $m(C)$ is proportional to the total lenght of $C$ (the 1-dimensional Hausdorff measure $\HH^1(C)$). In this case the precise definition of the cost function $F(C)$ in \eqref{costf} has to be reformulated more carefully in terms of measures (see Section \ref{snet}). This one-dimensional problem has been extensively studied in the extremal case where $H_1=0$ and $H_2(s)=|s|$ (see for instance \cite{bms09, bos02, bpss09, bust03, bust04}) providing an interesting geometric problem called {\it average distance problem}; an extended review on it can be found in \cite{lem12}. We also point out that a similar problem arises in some models for the mechanics of damage, see for instance \cite{bcgs}.

%%%%%%%%%%%%%%%%%%%%%%%%%%%%%%%%%%%%%%%%%%%%%%%%%%
\section{Perimeter constraint on the low-congestion region}\label{sperim}

In this section we consider the minimum problem \eqref{minpb}, where the cost $F(C)$ is given by \eqref{costf} and $m(C)=k\per(C)$, being $k>0$ and $\per(C)$ the perimeter of the set $C$ in the sense of De Giorgi (see for instance \cite{afp00}). Thanks to the coercivity properties of the perimeter with respect to the $L^1$ convergence of the characteristic functions (that we still call $L^1$ convergence of sets), we have the following existence result.

\begin{teo}\label{exper}
Assume that the cost $F(C)$ is finite for at least a subset $C$ of $\Ob$ with finite perimeter and that $m(C)=k\per(C)$ with $k>0$. Then there exists at least an optimal set $C_{opt}$ for problem \eqref{minpb}.
\end{teo}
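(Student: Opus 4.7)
The strategy is the direct method of the calculus of variations. Let $(C_n)$ be a minimizing sequence for \eqref{minpb} and, for each $n$, let $\sigma_n$ be a minimizer in the definition \eqref{costf} of $F(C_n)$ (existence of $\sigma_n$ follows from the standard direct method applied to the convex coercive functional in \eqref{congest} with mixed integrand $H_1\chi_{C_n}+H_2\chi_{\O\setminus C_n}$). Without loss of generality we may assume $F(C_n)+k\per(C_n)\le M<+\infty$ for all $n$. The goal is then to extract limits $(C,\sigma)$ and pass to a lower-bound inequality on the cost.

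\textbf{Compactness.} Since $\per(C_n)\le M/k$, the classical BV compactness theorem yields a subsequence with $\chi_{C_n}\to\chi_C$ in $L^1(\O)$ (hence a.e., up to a further subsequence) for some set $C\subset\Ob$ of finite perimeter, and $\per(C)\le\liminf_n \per(C_n)$. For the fluxes, since $H_1\le H_2$ we have the pointwise lower bound
\[
H_1(\sigma_n)\le \chi_{C_n}H_1(\sigma_n)+\chi_{\O\setminus C_n}H_2(\sigma_n),
\]
so $\int_\O H_1(\sigma_n)\,dx\le M$. Because $H_1$ is convex and superlinear, de la Vall\'ee--Poussin gives equi-integrability of $(\sigma_n)$, and by Dunford--Pettis we may assume $\sigma_n\rightharpoonup\sigma$ weakly in $L^1(\O;\R^d)$. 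The divergence constraint is linear in $\sigma$ and is tested against $\nabla\phi\in L^\infty$, so it passes to the weak limit: $-\dive\sigma=f$ in $\O$ with $\sigma\cdot n=0$ on $\partial\O$.

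\textbf{Lower semicontinuity of $F$.} Set $h_n(x,s):=\chi_{C_n}(x)H_1(s)+\chi_{\O\setminus C_n}(x)H_2(s)$ and $h(x,s):=\chi_{C}(x)H_1(s)+\chi_{\O\setminus C}(x)H_2(s)$. This is the delicate step because both the integrand and the argument vary with $n$. I would handle it via an Egorov truncation: since $\chi_{C_n}\to\chi_C$ a.e.\ and these are $\{0,1\}$-valued, for every $\eps>0$ there exists $E_\eps\subset\O$ with $|\O\setminus E_\eps|<\eps$ on which $\chi_{C_n}=\chi_C$ for all $n$ sufficiently large; hence $h_n(x,\cdot)=h(x,\cdot)$ on $E_\eps$ for large $n$. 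Using $h_n\ge 0$ and the classical lower semicontinuity of convex integrals with fixed normal convex integrand under weak $L^1$ convergence, we obtain
\[
\liminf_n \int_\O h_n(x,\sigma_n)\,dx\ \ge\ \liminf_n \int_{E_\eps} h(x,\sigma_n)\,dx\ \ge\ \int_{E_\eps} h(x,\sigma)\,dx.
\]
Letting $\eps\to 0$ through a sequence $\eps_k\to 0$, Fatou's lemma applied to $\chi_{E_{\eps_k}}h(x,\sigma)\ge 0$ yields $\liminf_k\int_{E_{\eps_k}} h(x,\sigma)\,dx\ge \int_\O h(x,\sigma)\,dx$. Since $\sigma$ is admissible for the constraint defining $F(C)$, the right-hand side is at least $F(C)$, so $F(C)\le\liminf_n F(C_n)$.

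\textbf{Conclusion.} Combining the two lower semicontinuity inequalities gives
\[
F(C)+k\per(C)\ \le\ \liminf_n\bigl(F(C_n)+k\per(C_n)\bigr)=\inf\eqref{minpb},
\]
so $C_{opt}:=C$ is optimal. The main obstacle is the joint lower semicontinuity in the third paragraph: because the integrand $h_n$ itself depends on $n$, the standard convex--weak lower semicontinuity does not apply directly, and the Egorov-plus-Fatou reduction to a fixed integrand on nested large sets is the core technical maneuver.
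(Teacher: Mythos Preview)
Your proof is correct and follows the same overall strategy as the paper's: the direct method, BV compactness for the sets, de la Vall\'ee--Poussin/Dunford--Pettis weak-$L^1$ compactness for the fluxes, passage to the limit in the divergence constraint, and lower semicontinuity of both terms.

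The one place where you differ is the justification of $F(C)\le\liminf_n F(C_n)$. The paper dispatches this in a single line by invoking the strong--weak lower semicontinuity theorem for integral functionals (citing \cite{bu89}): the integrand $(x,\theta,s)\mapsto \theta H_1(s)+(1-\theta)H_2(s)$ is convex in $s$, $\chi_{C_n}\to\chi_C$ strongly in $L^1$, and $\sigma_n\rightharpoonup\sigma$ weakly in $L^1$, which is exactly the setting of that theorem. You instead give a self-contained Egorov-plus-Fatou reduction to a fixed integrand on large sets. Your route is more elementary and exposes the mechanism (indeed, it is essentially one way to prove the cited theorem in this special case); the paper's is shorter because the black-box result fits perfectly. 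One small refinement: for the final Fatou step to yield $\liminf_k\chi_{E_{\eps_k}}=1$ a.e., either choose the $\eps_k$ summable (so Borel--Cantelli applies) or replace $E_{\eps_k}$ by the increasing unions $\bigcup_{j\le k}E_{\eps_j}$ and use monotone convergence.
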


\begin{proof}
Let $(C_n)_{n\in\N}$ be a minimizing sequence for the optimization problem \eqref{minpb}; then sequence $\per(C_n)$ is bounded. Thanks to the compactness of the embedding of ${\rm{BV}}$ into $L^1$, we may extract a (not relabeled) subsequence converging in $L^1$ to a subset $C$ of $\O$. We claim that this set $C$ is an optimal set for the problem \eqref{minpb}. Indeed, for the properties of the perimeter we have
$$\per(C)\le\liminf_n\per(C_n).$$
Moreover, if we denote by $\s_n$ an optimal (or asymptotically optimal) function for
$$F(C_n)=\int_{\O\setminus C_n}H_2(\s_n)\,dx+\int_{C_n}H_1(\s_n)\,dx$$
with
$$-\dive\s_n=f\hbox{ in }\O,\qquad\s_n\cdot n=0\hbox{ on }\partial\O,$$
by the superlinearity assumption on the congestion functions $H_1$ and $H_2$, and by the De La Vall\'ee Poussin compactness theorem, we have that $(\s_n)_{n\in\N}$ is compact for the weak $L^1$ convergence and so we may assume that $\s_n$ weakly converges in $L^1(\O)$ to a suitable function $\s$. This function $\s$ still verifies the condition
$$-\dive\s=f\hbox{ in }\O,\qquad\s\cdot n=0\hbox{ on }\partial\O.$$
Thanks to the convexity of $H_1$ and $H_2$ and the strong-weak lower semicontinuity theorem for integral functionals (see for instance \cite{bu89}), we have
$$F(C)\le\liminf_n F(C_n).$$
Therefore the set $C$ is optimal and the proof is concluded.
\end{proof}

Our aim now is to establish optimality conditions not only on an optimal flow $\s$ but also on the corresponding optimal low-congestion regions $C$. Optimality conditions for $\s$ can be directly derived from the duality formula:
\[\begin{split}
F(C)&=\inf_{\s\in\Gamma_f}\int_{C}H_1(\s)\,dx+\int_{\O\setminus C}H_2(\s)\,dx\\
&=-\inf_{u}\Big\{\int_C H_1^*(\nabla u)\,dx+\int_{\O\setminus C}H_2^*(\nabla u)\,dx-\int_\O uf\,dx\Big\},
\end{split}\]
from which one easily infers that 
\[\s=\begin{cases}
\sint\mbox{ in }C,\\
\sext\mbox{ in }\O\setminus C
\end{cases}\]
where 
\[\sint=\nabla H_1^*(\nabla\uint)\mbox{ in }C,\qquad\sext=\nabla H_2^*(\nabla\uext),\mbox{ in }\O\setminus C\]
the minimizer $u$ in the dual is then given by:
\[u=\begin{cases}
\uint\mbox{ in }C,\\
\uext\mbox{ in }\O\setminus C
\end{cases}.\]
We have used the notations $\sint$, $\sext$, $\uint$ and $\uext$ to emphasize the fact that $\s$ and $\nabla u$ may have a discontinuity when crossing $\partial C$. It is reasonable (by elliptic regularity and assuming smoothness of $C$) to assume that $\s$ and $\nabla u$ are Sobolev on $C$ and $\O\setminus C$ separately but they are a priori no better than $BV$ on the whole of $\O$. The functions $\uint$ and $\uext$ are then at least formally characterized by the Euler-Lagrange equations
\[-\dive\Big(\nabla H_1^*(\nabla\uint)\Big)=f\mbox{ in }C,\qquad
-\dive\Big(\nabla H_2^*(\nabla\uext)\Big)=f,\mbox{ in }\O\setminus C\]
together with
\[\nabla H_1^*(\nabla\uint)\cdot n=0,\mbox{ on }\partial\O\cap C,\qquad\nabla H_2^*(\nabla\uext)\cdot n=0,\mbox{ on }\partial\O\cap\Ob\setminus C,\]
and (assuming that $f$ does not give mass to $\partial C$) the continuity of the normal component of $\sigma$ across $\partial C$:
\[\Big(\nabla H_1^*(\nabla\uint)-\nabla H_2^*(\nabla\uext)\Big)\cdot n_C=0,\mbox{ on }\partial C\cap\O,\]
where $n_C$ denotes the exterior unit vector to $C$.

Now, we wish to give an extra optimality condition on $C$ itself assuming that is smooth. To do so, we take a smooth vector field $V$ such that $V\cdot n=0$ on $\partial\O$, and we set $C_t=\varphi_t(C)$, where $\varphi_t$ denotes the flow of $V$ (i.e. $\varphi_0={\rm{id}}$, $\partial_t\varphi_t(x)=V(\varphi_t(x))$). For $t>0$, we then have
\be\label{ineg0}
0\le\frac{1}{t}[F(C_t)-F(C)+k\per(C_t)-k\per(C)].
\ee
As for the perimeter term, it is well-known (see for instance \cite{hepi05}) that the first-variation of the perimeter involves the mean curvature $\HH$ of $\partial C$, more precisely, we have:
\be\label{per}
\frac{d}{dt}\per(C_t)\big|_{t=0}=\int_{\partial C}\HH\,V \cdot n_C\,d\HH^{d-1}.
\ee
For the term involving $H$, we observe that
\[F(C_t)-F(C)\le\int_{C_t}H_1(\s)\,dx-\int_C H_1(\s)\,dx+\int_{\O\setminus C_t }H_2(\s)\,dx-\int_{\O\setminus C}H_2(\s)\,dx.\] 
At this point, we have to be a little bit careful because of the discontinuity of $\s$ at $\partial C$, but distinguishing the part of $\partial C$ on which $V\cdot n_C>0$ that is moved outside $C$ by the flow, and that on which $V\cdot n_C<0$ that is moved inside $C$ by the flow, and arguing as in Theorem 5.2.2 of \cite{hepi05}, we arrive at:
\be\label{inegF}
\begin{split}
\limsup_{t\to0}\frac{F(C_t)-F(C)}{t}\le&\int_{\partial C}\Big(\big(H_1(\sext)-H_2(\sext)\big)(V\cdot n_C)_+\\
&+\big(H_2(\sint)-H_1(\sint)\big)(V\cdot n_C)_-\Big)\,d\HH^{d-1}.
\end{split}\ee
Combining \eqref{ineg0}, \eqref{per} and \eqref{inegF}, we obtain
%\[\begin{split}
%0\le \int_{\partial C} (H_1(\sext)-H_2(\sext)+k \HH) (V\cdot n_C)_+\\
%+ \int_{\partial C}(H_2(\sint)-H_1(\sint)-k \HH) (V\cdot n_C)_-.
%\end{split}\]
$$0\le\int_{\partial C}\Big(\big(H_1(\sext)-H_2(\sext)+k\HH\big)(V\cdot n_C)_+
+\big(H_2(\sint)-H_1(\sint)-k\HH\big)(V\cdot n_C)_-\Big)\,d\HH^{d-1}.$$
But since $V$ is arbitrary, we obtain the extra optimality conditions:
$$H_2(\sint)-H_1(\sint)\ge k\HH\ge H_2(\sext)-H_1(\sext)
\qquad\mbox{ on }\partial C\cap\O$$
which, since $H_2\ge H_1$, in particular implies that $\partial C$ has nonnegative mean curvature. 

The regularity of $\partial C$ is an interesting open question. Note that when $d=2$ and $\O$ is convex, replacing $C$ by its convex hull diminishes the perimeter and also the congestion cost, so that optimal regions $C$ are convex, this is a first step towards regularity, note also that convexity of optimal regions is consistent with  the curvature inequality above. 

Let us illustrate the previous conditions on the simple quadratic case where $H_1(\s)=\frac{a}{2}|\s|^2$, $H_2(\s)=\frac{b}{2}|\s|^2$ with $0<a<b$. The optimality conditions for the pair $u$, $\sigma$ then read as
\[\begin{cases}
-a\Delta\uint=f&\mbox{ in } C\\
-b\Delta\uext=f&\mbox{ in }\O\setminus C,
\end{cases}
\qquad\frac{\partial u}{\partial n}=0\mbox{ on $\partial\O$},
\qquad\begin{cases}
\sint=\frac{\nabla\uint}{a}\\
\sext=\frac{\nabla\uext}{b},
\end{cases}\]
together with
\[\Big(\frac{\nabla\uint}{a}-\frac{\nabla\uext}{b}\Big)\cdot n_C=0
\qquad\mbox{ on }\partial C\cap\O\]
and
\[\frac{b-a}{2}|\sint|^2=\frac{b-a}{2a^2}|\nabla\uint|^2\ge k\HH
\ge\frac{b-a}{2}|\sext|^2=\frac{b-a}{2b^2}|\nabla\uext|^2
\qquad\mbox{ on }\partial C\cap\O\]
where $\HH$ again denotes the mean curvature of $\partial C$.

%%%%%%%%%%%%%%%%%%%%%%%%%%%%%%%%%%%%%%%%%%%%%%%%%%
\section{Relaxed formulation for the measure penalization}\label{srelax}

In this section we consider the case when the penalization on the low-congestion region is proportional to the Lebesgue measure, that is $m(C)=k|C|$ with $k>0$. The minimization problem we are dealing with then becomes
\be\label{measureconstraint}
\min_{\s,C}\left\{\int_C H_1(\s)\,dx+\int_{\O\setminus C}H_2(\s)\,dx+k|C|\ :\ \s\in\Gamma_f\right\}\ee
where
$$\Gamma_f=\big\{\s\in L^1(\O;\R^d)\ :\ \dive \s=f\hbox{ in }\O,\ \s\cdot n=0\hbox{ on }\partial\O\big\}.$$
Passing from sets $C$ to density functions $\theta$ with $0\le\theta(x)\le1$ we obtain the relaxed formulation of \eqref{measureconstraint}
\be\label{relax}
\min_{\s,\theta}\left\{\int_\O\theta H_1(\s)\,dx+\int_\O(1-\theta)H_2(\s)\,dx+k\int_\O\theta\,dx\ :\ \s\in\Gamma_f\right\}.
\ee
Writing the quantity to be minimized as
$$\int_\O H_2(\s)+\theta\big(H_1(\s)+k-H_2(\s)\big)\,dx$$
the minimization with respect to $\theta$ is straightforward; in fact, if $H_1(\s)+k>H_2(\s)$ we take $\theta=0$, while if $H_1(\s)+k<H_2(\s)$ we take $\theta=1$. In the region where $H_1(\s)+k=H_2(\s)$ the choice of $\theta$ is irrelevant. In other words, we have taken
$$\theta=1_{\{H_1(\s)+k<H_2(\s)\}},$$
which gives
$$H_2+\theta\big(H_1+k-H_2\big)=H_2-\big(H_1+k-H_2\big)^-=H_2\wedge\big(H_1+k\big).$$
Therefore, in the relaxed problem \eqref{relax} the variable $\theta$ can be eliminated and the problem reduces to
\be\label{relaxinterm}
\min\left\{\int_\O H_2(\s)\wedge\big(H_1(\s)+k\big)\,dx\ :\ \s\in\Gamma_f\right\}.\ee
Clearly the infimum in \eqref{relaxinterm} coincides with that of \eqref{measureconstraint} but since the new integrand $H_2\wedge\big(H_1+k\big)$ is not convex, a further relaxation with respect to $\s$ is necessary. This relaxation issue with a divergence constraint has been studied in \cite{bra87}, where it is shown that the relaxation procedure amounts to convexify the integrand. We then end up with the minimum problem
\be\label{relaxpb}
\min\left\{\int_\O\Big(H_2(\s)\wedge\big(H_1(\s)+k\big)\Big)^{**}\,dx\ :\ \s\in\Gamma_f\right\}
\ee
where $**$ indicates the convexification operation. Recalling that $H_1$ and $H_2$ are superlinear, we have that:
\begin{itemize}
\item[-]in the region where
$$\Big(H_2\wedge\big(H_1+k\big)\Big)^{**}(\s)=H_2(\s)$$
we take $\theta=0$. In other words, in this region, it is better not to spend resources for improving the traffic congestion;

\item[-]in the region where
$$\Big(H_2\wedge\big(H_1+k\big)\Big)^{**}(\s)=H_1(\s)+k$$
we take $\theta=1$. In other words, in this region, it is necessary to spend a lot of resources for improving the traffic congestion;

\item[-]in the region where
$$\Big(H_2\wedge\big(H_1+k\big)\Big)^{**}(\s)<\Big(H_2\wedge\big(H_1+k\big)\Big)(\s)$$
we have $0<\theta(x)<1$ so that there is some mixing between the low and the high congestion functions. In other words, in this region the resources that are spent for improving the traffic congestion are proportional to $\theta$.
\end{itemize}

The previous situation is better illustrated in the case where both functions $H_1$ and $H_2$ depend on $|\s|$ and $H_2-H_1$ increases with $|\s|$. In this case, we denote by $r_1$ the maximum number such that
$$\Big(H_2\wedge\big(H_1+k\big)\Big)^{**}(r)=H_2(r)$$
and by $r_2$ the minimum number such that
$$\Big(H_2\wedge\big(H_1+k\big)\Big)^{**}(r)=H_1(r)+k,$$
then we have
$$\theta(x)=\frac{|\s|-r_1}{r_2-r_1}\qquad\hbox{whenever }r_1<|\s|<r_2.$$
In this case, for small values of the traffic flow ($|\s|\le r_1$), it is optimal not to spend any resource to diminish congestion, on the contrary when traffic becomes large ($|\s|\ge r_2$), it becomes optimal to reduce the congestion to $H_1$. Finally, for intermediate values of the traffic, mixing occurs with the coefficient $\theta$ above as a result of the relaxation procedure. 

Also, problem \eqref{relaxpb} is of type \eqref{congest} and it is well-known, by convex analysis, that we have the dual formulation
\begin{align}\label{dual}
\min\Big\{\int_\O H(\s)\,dx\ :\ \s\in\Gamma_f\Big\}&=\sup\Big\{\int_\O u\,df-\int_\O H^*(\nabla u)\,dx\Big\}\nonumber\\
&=-\inf\Big\{\int_\O H^*(\nabla u)\,dx-\int_\O u\,df\Big\},
\end{align}
where $H(\s)=(H_2(\s)\wedge(H_1(\s)+k))^{**}$.
Notice that the Euler-Lagrange equation of problem \eqref{dual} is formally written as
\be\label{lap}
\begin{cases}
-\dive\nabla H^*(\nabla u)=f&\mbox{in }\O\\
\nabla H^*(\nabla u)\cdot\nu=0&\mbox{on }\partial\O.
\end{cases}
\ee
Moreover, the link between the flux $\s$ and the dual variable $u$ is
$$\s=\nabla H^*(\nabla u).$$
In our case, the Fenchel tranform is easy computed and we have:
$$H^*(\xi)=H_2^*(\xi)\vee(H_1^*(\xi)-k).$$

As a conclusion of this paragraph, we observe that the treatment above is similar to the analysis of two-phase optimization problems. This consists in finding an optimal design for a domain that is occupied by two constituent media with constant conductivities $\alpha$ and $\beta$ with $0<\alpha<\beta<+\infty$, under an objective function and a state equation that have a form similar to \eqref{dual} and \eqref{lap}. We refer to \cite{bubu05} (and references therein) for a general presentation of shape optimization problems and to \cite{allaire} for a complete analysis of two-phase optimization problems together with numerical methods to treat them.

%%%%%%%%%%%%%%%%%%%%%%%%%%%%%%%%%%%%%%%%%%%%%%%%%%
\section{low-congestion transportation networks}\label{snet}

In this section, our main unknown is a one-dimensional subset $\Sigma$ of $\O$; we consider a fixed number $r>0$ and the low-congestion regions of the form
$$C_{\Sigma,r}=\big\{x\in\Ob\ :\ \dist(x,\Sigma)\le r\big\}
=\Sigma^r\cap\Ob,\mbox{ where }\Sigma^r:=\Sigma+B_r(0).$$
and $\Sigma$ is required to be a closed subset of $\Ob$ such that $\HH^1(\Sigma)<+\infty$. The penalization term $m(C_{\Sigma,r})$ is taken proportional to the Lebesgue measure of $C_{\Sigma,r}$, so that our optimization problem becomes
\be\label{pbsigmar}
\min_{\s,\Sigma}\left\{\int_{C_{\Sigma,r}}H_1(\s)\,dx+\int_{\O\setminus C_{\Sigma,r}}H_2(\s)\,dx+k|C_{\Sigma,r}|\ :\ \s\in\Gamma_f\right\}
\ee
with $k>0$. A key point in the existence proof below consists in remarking that the perimeter of an $r$-enlarged set $\Sigma^r$ can be controlled by its measure (see Appendix \ref{Aa}). It also worth remarking that $\Sigma^r$ has the \emph{uniform interior ball of radius $r$ property}; for every $x\in\Sigma^r$ there exists $y\in \R^d$ such that $|x-y|\le r$ and $B_r(y)\subset\Sigma^r$. Clearly, $r$-enlarged sets have the uniform interior ball of radius $r$ property and sets with this property are $r$-enlarged sets (i.e. can be written as the sum of a closed set and $B_r(0)$), we refer to \cite{acm} for more on sets with the uniform interior ball property, and in particular estimates on their perimeter (which we could have used instead of the more elementary Lemma in Appendix \ref{Aa}).

\begin{prop}\label{exsigmar}
Ler $r>0$ be fixed, $d=2$ and assume that $F(C_{\Sigma,r})<+\infty$ for some closed one-dimensional subset $\Sigma$ of $\Ob$. Then the optimization problem \eqref{pbsigmar} admits a solution.
\end{prop}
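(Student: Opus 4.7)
The plan is to apply the direct method of the calculus of variations, closely mirroring the argument of Theorem~\ref{exper} and using the Appendix~\ref{Aa} lemma in place of an \emph{a priori} perimeter bound. Fix a minimizing sequence $(\sigma_n,\Sigma_n)$, which exists since the feasibility hypothesis gives a finite infimum. From the cost bound and $k>0$ we get a uniform bound on $|C_{\Sigma_n,r}|=|\Sigma_n^r\cap\Ob|$, and then, crucially, the Appendix lemma controls $\per(\Sigma_n^r)$ by its Lebesgue measure, giving a uniform bound on $\per(\Sigma_n^r)$. Hence, by the compact embedding ${\rm BV}\hookrightarrow L^1$, a subsequence of the characteristic functions $\chi_{C_{\Sigma_n,r}}$ converges in $L^1(\Omega)$ to some $\chi_C$.

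Next I would identify the limit set geometrically. Applying Blaschke's selection theorem to the compact sets $\Sigma_n\subset\Ob$, extract a further subsequence with $\Sigma_n\to\Sigma$ in Hausdorff distance for some closed $\Sigma\subset\Ob$. The $r$-enlargement is continuous for the Hausdorff distance, so $\Sigma_n^r\to\Sigma^r$ in Hausdorff, which forces $C=\Sigma^r\cap\Ob=C_{\Sigma,r}$. The uniform interior ball property of radius $r$ is Hausdorff-closed, so the limit set indeed remains an $r$-enlarged set. For the fluxes, superlinearity of $H_1$ and $H_2$ together with the uniform cost bound yields equi-integrability of $(\sigma_n)$ via De La Vall\'ee Poussin, hence (up to subsequence) $\sigma_n\rightharpoonup\sigma$ in $L^1(\Omega;\R^d)$; the divergence constraint passes to the limit by testing against $C^\infty(\Ob)$.

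Lower semicontinuity of the functional is then the same strong--weak lsc argument used in Theorem~\ref{exper}: the convexity of $H_1,H_2$ together with the strong $L^1$ convergence $\chi_{C_n}\to\chi_C$ and weak convergence $\sigma_n\rightharpoonup\sigma$ gives
\[\int_{C_{\Sigma,r}}H_1(\sigma)\,dx+\int_{\O\setminus C_{\Sigma,r}}H_2(\sigma)\,dx\le\liminf_n\left(\int_{C_{\Sigma_n,r}}H_1(\sigma_n)\,dx+\int_{\O\setminus C_{\Sigma_n,r}}H_2(\sigma_n)\,dx\right),\]
while $|C_{\Sigma_n,r}|\to|C_{\Sigma,r}|$ is immediate from the $L^1$ convergence of characteristic functions.

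The one delicate point, and the place where $d=2$ is used, is the admissibility of $\Sigma$: the Hausdorff limit is closed but need not be one-dimensional nor have $\HH^1(\Sigma)<+\infty$. Since the functional only depends on $\Sigma$ through $C_{\Sigma,r}$, I would replace $\Sigma$ by a one-dimensional subset $\tilde\Sigma$ (its medial axis / skeleton inside $\Sigma^r$) with $\tilde\Sigma^r=\Sigma^r$. In the plane, the medial axis of a compact set with the uniform interior ball property of radius $r$ is a rectifiable $1$-set whose $\HH^1$ measure is controlled by $r^{-1}\per(\Sigma^r)<+\infty$; this is where dimension two is essential, since in higher dimensions the skeleton carries nontrivial $(d-1)$-dimensional content. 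The pair $(\sigma,\tilde\Sigma)$ is then admissible and attains the infimum. I expect this last reduction to be the main obstacle: in the body of the proof one must specify the skeleton construction and verify rectifiability and the bound on $\HH^1(\tilde\Sigma)$ carefully.
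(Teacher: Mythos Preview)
Your overall strategy matches the paper's: the Appendix perimeter estimate gives $L^1$ compactness of $C_{\Sigma_n,r}$, superlinearity and De~La~Vall\'ee~Poussin handle the fluxes, and the strong--weak lower semicontinuity argument of Theorem~\ref{exper} closes the energy part. Two points, however, deserve correction.

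First, the step ``$\Sigma_n^r\to\Sigma^r$ in Hausdorff, which forces $C=\Sigma^r\cap\Ob$'' is not automatic: Hausdorff convergence does not by itself imply convergence of Lebesgue measures. The paper (which takes the Hausdorff limit $E$ of $\Sigma_n^r$ directly---equivalent to your route since the $r$-enlargement is $1$-Lipschitz for $d_H$) proves $|C_n|\to|E\cap\Ob|$ by hand: the nontrivial half is $|E\setminus\Sigma_n^r|\le|\Sigma_n^{r+\eps}\setminus\Sigma_n^r|\le K\eps$, obtained via the coarea formula and the uniform perimeter bound on the family $\Sigma_n^{r+s}$.

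Second, and more seriously, the medial-axis replacement is wrong as stated. The medial axis $M$ of a set $E$ with the uniform interior ball property of radius $r$ consists of centers of \emph{maximal} inscribed balls, whose radii may well exceed $r$; hence $M^r\subsetneq E$ in general. Concretely, for $E=\overline{B_R(0)}$ with $R>r$ one has $M=\{0\}$ but $M^r=\overline{B_r(0)}\ne E$. The paper uses a different, elementary one-dimensional representative: with $d:=\dist(\,\cdot\,,\R^2\setminus E)$ and $L:=\lfloor r^{-1}\max d\rfloor$, it sets
\[
\Sigma:=\bigcup_{l=1}^{L}d^{-1}(\{lr\}),
\]
and checks that $\Sigma^r=E$ (each $x\in E$ lies within distance $r$ of one of these level curves, the interior ball property being used for the outermost shell $0\le d<r$) and that $\HH^1(\Sigma)<+\infty$. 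This explicit level-set construction replaces the skeleton step you correctly anticipated as the main obstacle, and is precisely where $d=2$ is used.
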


\begin{proof}
The sets $C_{\Sigma,r}$ satisfy the inequality (see for instance Appendix \ref{Aa})
$$\per(C_{\Sigma,r})\le\frac{K}{r}|C_{\Sigma,r}|$$
for a suitable constant $K$ depending only on the dimension $d$. Therefore, for a minimizing sequence $(\Sigma_n)_{n\in\N}$, the sets $C_n:=C_{\Sigma_n,r}=\Sigma_n^r \cap\Ob$ are compact in the strong $L^1$ convergence, we can thus extract a (not relabeled) subsequence such that $C_n$ converges strongly in $L^1$ (and a.e.) to some $C$. One can then repeat the proof of Theorem \ref{exper}, to obtain
\[F(C)+k|C|\le\inf\eqref{pbsigmar}.\]
It only remains to show that $C$ can be obtained as $C=C_{\Sigma,r}$ (up to a negligible set) for some closed subset of $\Ob$, $\Sigma$ such that $\HH^1(\Sigma)<+\infty$. Up to an extraction, one can assume that $\Sigma_n^r$ converges for the Hausdorff distance to some compact set $E$ (which also satisfies the uniform interior ball property of radius $r$). Let us first check that $C=E\cap\Ob$ (up to a negligible set), the inclusion $C\subset E\cap\Ob$ is standard (see for instance \cite{hepi05}). To prove the converse inclusion, it is enough to show that $|C|=|E\cap\Ob|$ i.e. $|C_n|\to|E\cap\Ob|$ as $n\to\infty$. For this, we observe that
\[\big||C_n|-|E\cap\Ob|\big|\le|\Sigma_n^r\setminus E|+|E\setminus\Sigma_n^r|\]
The convergence of $|\Sigma_n^r\setminus E|$ to $0$ easily follows from the Hausdorff convergence of $\Sigma_n^r$ to $E$ and the fact that $E$ is closed (see \cite{hepi05} for details). As for the convergence of $|E\setminus\Sigma_n^r|$ to $0$, we proceed as follows: let $\eps>0$ and $n$ be large enough so that $E\subset \Sigma_n^r+B_\eps(0)=\Sigma_n^{r+\eps}$, using the coarea formula, as in Appendix \ref{Aa}, and the fact that the sets $\Sigma_n^{r+\eps}$ have bounded perimeter (see again Appendix \ref{Aa} or \cite{acm}), we get for $n$ large enough
$$|E\setminus\Sigma_n^r|\le|\Sigma_n^r+B_\eps(0)\setminus\Sigma_n^r|\le C\eps.$$
We thus have proved that $C=E\cap\Ob$ (up to a negligible set). Let us finally denote by $d$ the distance to $\R^2\setminus E$ and set
$$\Sigma:=\bigcup_{l=1}^L d^{-1}(\{lr\})$$
where $L$ is the integer part of $r^{-1}\max d$. It is then not difficult to check that $\HH^1(\Sigma)<+\infty$ and $\Sigma^r=E$ because $E$ satisfies the uniform interior ball property of radius $r$ so that $C=C_{\Sigma,r}$, which ends the proof.
\end{proof}

\begin{oss}
We have used the assumption that $d=2$ only in the last step that is to prove that $C=C_{\Sigma,r}$ for some \emph{one-dimensional} $\Sigma$. In higher dimensions, the same proof works if one requires $\HH^{d-1}(\Sigma)<+\infty$ (however we believe the result remains true for one-dimensional sets in any dimension). 
\end{oss}

\begin{oss}
If the admissible sets $\Sigma$ are supposed connected (in this case we call them {\it networks}), or with an a priori bounded number of connected components, then the penalization term $|C_{\Sigma,r}|$ can be replaced by the one-dimensional Hausdorff measure $\HH^1(\Sigma)$. In fact, for such sets we have
$$|C_{\Sigma,r}|\le M\big(1+\HH^1(\Sigma)\big)$$
where the constant $M$ depends on the dimension $d$, on $r$, and on the number of connected components of $\Sigma$. Therefore the argument of Proposition \ref{exsigmar} applies, providing the existence of an optimal solution.
\end{oss}

We deal now with the case when the low-congestion region is a one-dimensional set $\Sigma$. We assume $\Sigma$ connected (or with an a priori bounded number of connected components) and we take $m(\Sigma)$ proportional to the one-dimensional Hausdorff measure $\HH^1(\Sigma)$. The integral on the low-congestion region has to be modified accordingly and we have to consider the problem formally written as
\be\label{pbsigma0}
\min_{\s,\Sigma}\left\{\int_\Sigma H_1(\s)\,d\HH^1+\int_\O H_2(\s)\,dx+k\HH^1(\Sigma)\ :\ \s\in\Gamma_f\right\}
\ee
with $k>0$. Notice that, in view of the superlinearity assumption on the congestion functions $H_1$ and $H_2$, the admissible fluxes $u$ have to be assumed absolutely continuous measures with respect to $\LL^d\lfloor\O+\HH^1\lfloor\Sigma$. Subsequently, the integral terms in the cost expression have to be intended as:
$$\int_\Sigma H_1\Big(\frac{d\s}{d\HH^1}\Big)\,d\HH^1+\int_\O H_2\Big(\frac{d\s}{d\LL^d}\Big)\,dx.$$
By an abuse of notation, when no confusion may arise, we continue to write the terms above as $\int_\Sigma H_1(\s)\,d\HH^1+\int_\O H_2(\s)\,dx$.

In general, the optimization problem \eqref{pbsigma0} does not admit a solution $\Sigma_{opt}$, because the limits of minimizing sequences $\Sigma_n$ may develop multiplicities, providing as an optimum a relaxed solution made by a one-dimensional set $\Sigma_{opt}$ and function $a\in L^1(\Sigma_{opt})$ with $a(x)\ge1$. The relaxed version of problem \eqref{pbsigma0}, taking into account these multiplicities, becomes
\be\label{relpbsigma}
\min_{\s,\Sigma,a}\left\{\int_\Sigma H_1(\s/a)a\,d\HH^1+\int_\O H_2(\s)\,dx+k\int_\Sigma a\,d\HH^1\ :\ \s\in\Gamma_f\right\}.
\ee
The optimization with respect to $a$ is easy: consider for simplicity the case
$$H_1(\s)=\alpha|\s|^p\qquad\hbox{with }\alpha>0,\ p>1;$$
then we have
$$\min_{a\ge1}\left(ka+\alpha\frac{|\s|^p}{a^{p-1}}\right)=H(\s)=
\begin{cases}
\alpha|\s|^p+k&\ds\hbox{if }|\s|^p\le\frac{k}{\alpha(p-1)}\\
\ds|\s|\alpha^{1/p}p\Big(\frac{k}{p-1}\Big)^{1-1/p}&\ds\hbox{if }|\s|^p\ge\frac{k}{\alpha(p-1)}.
\end{cases}$$
Therefore the relaxed problem \eqref{relpbsigma} can be rewritten as
$$\min_{\s,\Sigma}\left\{\int_\Sigma H(\s)\,d\HH^1+\int_\O H_2(\s)\,dx\ :\ \s\in\Gamma_f\right\}$$
and the multiplicity density $a(x)$ on $\Sigma$ (that can be interpreted as the width of the road $\Sigma$ at the point $x$) is given by
$$a(x)=1\vee|\s(x)|\Big(\frac{\alpha(p-1)}{k}\Big)^{1/p}.$$

%%%%%%%%%%%%%%%%%%%%%%%%%%%%%%%%%%%%%%%%%
\section{Numerical simulations}

Here we wish to give a numerical example which clarifies and confirms what we expected from the analysis done in Section \ref{srelax}. In our examples, we mainly focus on the problem in the form \eqref{dual}:
$$\min\Big\{\int_\O H^*(\nabla u)\,dx-\int_\O fu\,dx\Big\}.$$

The numerical simulation is based on a very simple situation that however seems quite reasonable. The two congestion function considered are both quadratic but with a different coefficient, say $H_1(\s)=a|\s|^2$ and $H_2(\s)=b|\s|^2$ with $a<b$. Then, in this case, the function $H^*$ involved in \eqref{dual} is easy to compute:
$$H^*(\xi)=\Big(\frac{\xi^2}{4b}\Big)\vee\Big(\frac{\xi^2}{4a}-k\Big)$$

Before we start illustrating the numerical result, it is useful to do some considerations that justify the choice of some parameters in the following. The dual variable $u$ has to be thought as a price system for a company handling the transport in a congested situation. An optimizer $u$ then gives the price system which maximizes the profit of the company. When you take into account a congested transport between sources (here called $f^+$ and $f^-$), the total mass plays an important role: as observed in \cite{bracar}, in the case of a small mass, hence of a large Lagrange multiplier $k$, the congestion effects are negligible, so one can expect in this case a distribution of the low-congestion region around the source distribution. On the contrary, for a large mass, hence for a small Lagrange multiplier $k$, we may expect a distribution of the low-congestion region also between the sources $f^+$ and $f^-$.

In the following examples, we will consider as sources $f^+$ and $f^-$ two Gaussian distributions with variance $\lambda$, centered at two points $x_0$ and $x_1$
$$f^+(x)=\frac{1}{\sqrt{2\pi\lambda}}e^{-|x-x_0|^2/(2\lambda)},\qquad f^-(x)=\frac{1}{\sqrt{2\pi\lambda}}e^{-|x-x_1|^2/(2\lambda)}.$$
Of course, a large value of $\lambda$ means less concentration (and, on  the contrary, a small $\lambda$ captures more concentration); analogously, a large value of the penalization parameter $k$ corresponds to a small quantity of available resources. Ending this consideration on parameters involved, we note that the traffic congestion parameters $a,b$ and the ``construction cost'' parameter $k$ are linked: we will change value of $k$ according to a suitable choice of ratio $\frac{a}{b}$, for fixed $\lambda$. Now, concerning the choice of the coefficients $a,b$ we take $a=1$ and $b=4$, which means that the velocity in the low-congestion region is, at equal traffic density, four times the one in the region with normal congestion.

Using the equivalent dual formulation \eqref{dual} of problem \eqref{relax}, we find numerically the solution $u$, hence the flux $\s$ and the optimal density $\theta$.

Now, using the dual formulation of the problem, we find numerically the solution $u$ of \eqref{dual} and we obtain the flux $\s$ as explained in Section \ref{srelax}. The numerical procedure  to find $u$ uses a \textit{Quasi-Newton} method that updates an approximation of  the Hessian matrix at each iteration (see \cite{bfgs} and reference therein). First we generate a finite element space with respect to a square grid. Then we implement the BFGS method, using a routine included in the packages of software {\tt FreeFem3D} (available at \textit{http://www.freefem.org/ff3d}) that has the follow structure:
$$\hbox{BFGS(J,dJ,u,eps=1.e-6,nbiter=20) }$$

The routine above means: \textit{find the optimal ``u'' for the functional J}. The necessary parameters are the functional $J$, the gradient $dJ$ and the $u$ variable. Other parameters are optional and with clear means.

\begin{exam}
The common setting of the simulation is a transportation domain $\O=[0,1]^2$ with a $30\times30$ grid; we consider as initial and final distribution of resources two Gaussian approximations (with common variance $\lambda$) of Dirac delta function $f^-$ and $f^+$ respectively centered at $x_0=(0.3,0.3)$ and $x_1=(0.7,0.7)$. In the examples below we take different values of the parameters $k$ and $\lambda$ according to the considerations above, to show how the optimal distributions of the low-congestion regions may vary. Using the same notation as in Section \ref{srelax}, there are black and white region (respectively $\theta=1$ and $\theta=0$), passing through grey levels for the intermediate congestion.

In Figure \ref{fig1} we take the variance parameter $\lambda=0.02$, which provides the initial and final mass distributions not too concentrated, as depicted in Figure \ref{fig1} (a). In Figure \ref{fig1} (b) and (c) we take the penalization parameter $k=0.06$ and $k=0.4$ respectively; we see that in these cases, due to the low concentration of the initial and final mass distributions, the optimal density $\theta$ is higher in the region between $x_0$ and $x_1$.

In Figure \ref{fig2} we take the variance parameter $\lambda=0.001$, which provides the initial and final mass distributions rather concentrated, as depicted in Figure \ref{fig2} (a). In Figure \ref{fig2} (b) and (c) we take the penalization parameter $k=0.01$ and $k=0.05$ respectively; we see that in these cases, due to the high concentration of the initial and final mass distributions, the optimal density $\theta$ is higher in the region around $x_0$ and $x_1$. 

\begin{figure}[h!]
\centering%
\subfigure[$\lambda=0.02$]%
{\includegraphics[scale=0.33]{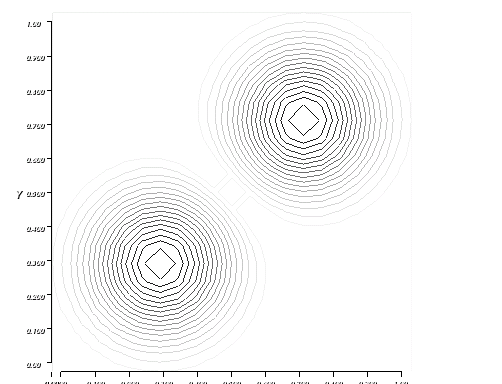}}\qquad
\subfigure[$k=0.06$]%
{\includegraphics[scale=0.33]{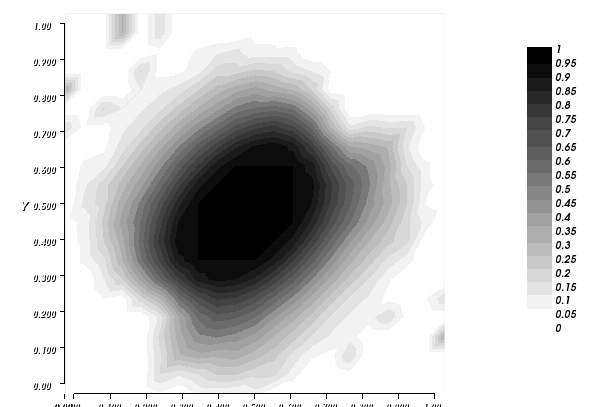}}\qquad
\subfigure[$k=0.4$]%
{\includegraphics[scale=0.33]{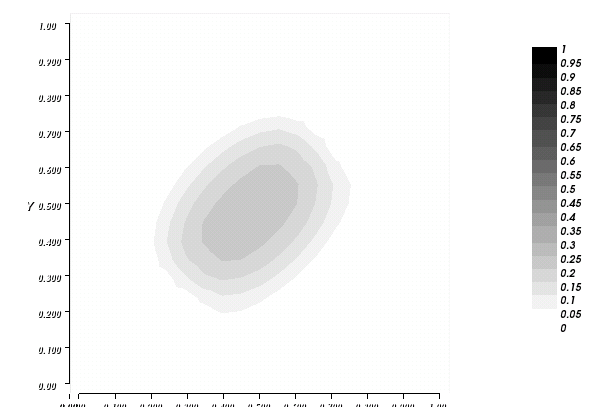}}
\caption{ }\label{fig1}
\end{figure}

\begin{figure}[h!]
\centering%
\subfigure[$\lambda=0.001$]%
{\includegraphics[scale=0.33]{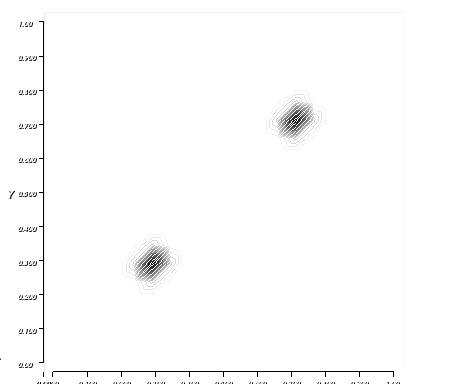}}\qquad
\subfigure[$k=0.01$]%
{\includegraphics[scale=0.33]{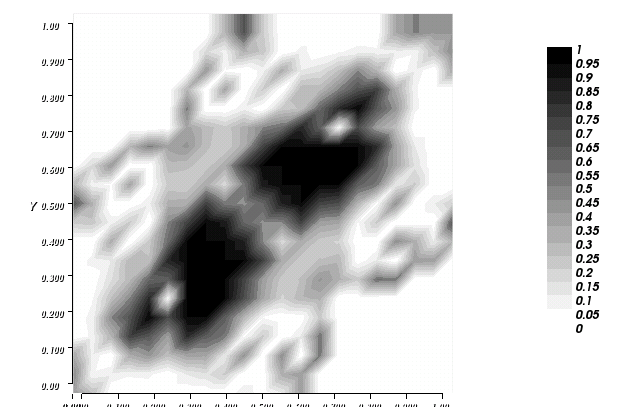}}\qquad
\subfigure[$k=0.05$]%
{\includegraphics[scale=0.33]{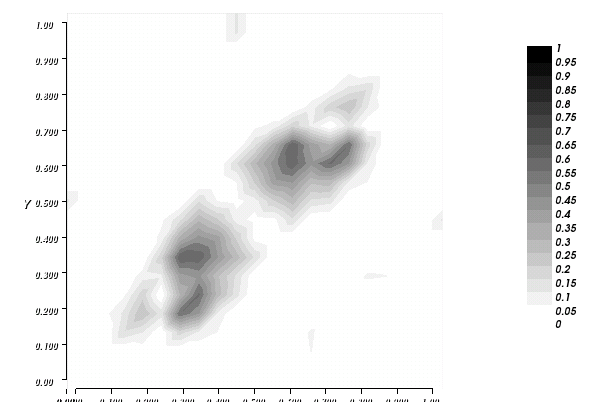}}
\caption{ }\label{fig2}
\end{figure}

\end{exam}

%%%%%%%%%%%%%%%%%%%%%%%%%%%%%%%%%%%%%%%%%%%%%%%%%%
\appendix
\section{A geometric inequality}\label{Aa}

In this appendix we prove the following result.

\begin{prop}\label{appen}
For every set $E\subset\R^d$ and for every $r>0$, setting $E_r=\big\{x\in\R^d\ :\ \dist(x,E)<r\big\}$, we have
\be\label{ineqapp}
\per(E_r)\le\frac dr|E_r|.
\ee
\end{prop}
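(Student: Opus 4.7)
My plan is to apply the divergence theorem on $E_r=\{d_E<r\}$ to the Lipschitz vector field
$$v(x):=d_E(x)\,\nabla d_E(x)=\tfrac12\,\nabla\bigl(d_E(x)^2\bigr),\qquad d_E(x):=\dist(x,E).$$
The choice of $v$ is dictated by the observation that on $\partial E_r$ we have $d_E=r$ and (a.e.) $\nabla d_E$ is the outward unit normal $n$ to $E_r$, so $v\cdot n=r$, which makes the boundary flux equal exactly to $r\per(E_r)$.

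The key estimate is $\dive v\le d$ in the sense of distributions, or equivalently $\Delta(d_E^2)\le 2d$. This is a classical consequence of the semi-concavity of $d_E^2$: indeed
$$d_E(x)^2=\inf_{p\in E}|x-p|^2=|x|^2+\inf_{p\in E}\bigl(|p|^2-2\,x\cdot p\bigr),$$
and the last infimum is a concave function of $x$ as the infimum of affine functions, so $d_E^2-|x|^2$ is concave and $\Delta(d_E^2)\le\Delta|x|^2=2d$.

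Combining via a Gauss--Green formula for the bounded $BV$ vector field $v$ on the set of finite perimeter $E_r$, one gets
$$r\,\per(E_r)=\int_{\partial^*\!E_r}v\cdot n\,d\HH^{d-1}=\int_{E_r}\dive v\,dx\le d\,|E_r|,$$
which is precisely \eqref{ineqapp}.

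The main technical point is making the divergence theorem rigorous: since $v$ is only Lipschitz, its distributional divergence is a signed Radon measure (controlled from above by $2d\,\LL^d$), and the regularity of $\partial E_r$ is not known a priori. A convenient workaround is to establish the inequality first for the sub-level sets $\Omega_t:=\{d_E<t\}$ with $t<r$: since $d_E$ is $1$-Lipschitz with $|\nabla d_E|=1$ a.e.\ outside $E$, the coarea formula ensures that $\per(\Omega_t)\le\HH^{d-1}(\{d_E=t\})<+\infty$ for almost every $t$, so the Gauss--Green formula applies on $\Omega_t$ and gives $\per(\Omega_t)\le (d/t)|\Omega_t|$ for a.e.\ $t$; then letting $t\nearrow r$ and using $L^1$-lower semicontinuity of the perimeter yields \eqref{ineqapp}.
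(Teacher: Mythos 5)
Your argument is correct in substance and takes a genuinely different route from the paper's. The paper first reduces to a smooth closed $E$ by approximation and then proves a monotonicity statement: setting $f(r)=d|E_r|-r\per(E_r)$, the coarea formula gives $\frac{d}{dr}|E_r|=\per(E_r)$, the interior-ball property of $E_r$ bounds the mean curvature of $\partial E_r$ by $(d-1)/r$, hence $\frac{d}{dr}\per(E_r)\le\frac{d-1}{r}\per(E_r)$, so $f'\ge0$ and $f(0^+)=d|E|\ge0$ yields \eqref{ineqapp}. Your proof instead exploits the semiconcavity of $d_E^2$ through the field $v=\frac12\nabla(d_E^2)$, avoiding both the smoothing of $E$ and any curvature/first-variation computation, and working directly for an arbitrary set; this is arguably more self-contained. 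Two caveats. First, $v$ is in general \emph{not} Lipschitz: $\nabla d_E$ jumps across the ridge (cut locus) of $d_E$ (already for $E$ two points in $\R$), so $v$ is only a bounded BV field whose distributional divergence is a measure; semiconcavity gives $\dive v\le d\,\LL^d$ with a nonpositive singular part on the ridge, which fortunately has the favorable sign for your inequality --- your later phrasing ("signed Radon measure") is the correct framework, the initial "Lipschitz" claim is not. Second, for divergence-measure fields the normal trace on $\partial^*\Omega_t$ is not a pointwise object, so writing $\int_{\partial^*\Omega_t}v\cdot n\,d\HH^{d-1}=t\per(\Omega_t)$ is not an off-the-shelf Gauss--Green citation; your a.e.-level workaround is the right fix and can be closed without trace theory: one may assume $|E_r|<+\infty$ (hence $E$ bounded), test $\dive v\le d\,\LL^d$ against Lipschitz cutoffs $\phi_\eps(d_E)$ approximating $1_{[0,t)}$, and use the coarea formula with $|\nabla d_E|=1$ a.e. on $\{d_E>0\}$ to get $t\,\HH^{d-1}(\{d_E=t\})\le d\,|\Omega_t|$ for a.e. $t<r$; since $\partial\Omega_t\subset\{d_E=t\}$ this gives $\per(\Omega_t)\le\frac dt|\Omega_t|$, and letting $t\nearrow r$ with $\Omega_t\uparrow E_r$ and lower semicontinuity of the perimeter yields \eqref{ineqapp}. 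With these adjustments your proof is complete.
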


\begin{proof}
The inequality above can be deduced from the results in the appendix of \cite{busast}; the present proof was obtained during a discussion with Giovanni Alberti, that we thank for his help.

Since the set $E_r$ only depends on the closure of $E$, we may assume that $E$ is closed; moreover, approximating $E$ by smooth sets (for instance by the sets $E_s$ with $s\to0$), we may also assume that $E$ is smooth.

Consider now the function
$$f(r)=d|E_r|-r\per(E_r);$$
proving \eqref{ineqapp} amounts to show that $f(r)\ge0$ for every $r>0$. Since $E$ is assumed smooth, we have
$$\lim_{r\to0}|E_r|=|E|,\qquad\lim_{r\to0}\per(E_r)=\per(E),$$
so that
$$\lim_{r\to0}f(r)=d|E|\ge0.$$
By the coarea formula we have for all $r<s$
$$|E_s|-|E_r|=\int_{E_s\setminus E_r}|\nabla\dist(x,E)|\,dx=\int_r^s\per(E_t)\,dt$$
so that
$$\frac{d}{dr}|E_r|=\per(E_r).$$
Denoting by $h(x)$ the mean curvature of $\partial E_r$ at $x$, and taking into account the definition of $E_r$, we have $h(x)\le(d-1)/r$, so that
$$\frac{d}{dr}\per(E_r)=\int_{\partial E_r}h(x)\,d\HH^{d-1}\le\frac{d-1}{r}\per(E_r).$$
Therefore,
$$f'(r)=d\frac{d}{dr}|E_r|-\per(E_r)-r\frac{d}{dr}\per(E_r)\ge0,$$
which implies that $f(r)\ge0$ for every $r>0$.
\end{proof}

\ack The work of the first and third authors is part of the project 2010A2TFX2 {\it``Calcolo delle Variazioni''} funded by the Italian Ministry of Research and University. The second author gratefully acknowledges the support of INRIA and the ANR through the Projects ISOTACE (ANR-12-MONU-0013) and OPTIFORM (ANR-12-BS01-0007).

%%%%%%%%%%%%%%%%%%%%%%%%%%%%%%%%%%%%%%%%%%%%%%%%%%

\end{document}